\renewcommand{\geq}{\geqslant}
\renewcommand{\leq}{\leqslant}
\begin{document}

\title{Nesterov's method of dichotomy via Order Oracle: The problem of optimizing a two-variable function on a square}
\titlerunning{Nesterov's method of dichotomy via Order Oracle}

\author{Boris Chervonenkis\inst{1}\orcidID{0009-0000-4138-8337} \and
Andrei Krasnov\inst{1}\orcidID{0009-0006-4910-0944} \and
Alexander Gasnikov\inst{1, 2, 3}\orcidID{0000-0002-7386-039X} \and
Aleksandr Lobanov\inst{1, 4, 5}\orcidID{0000-0003-1620-9581} }
\authorrunning{B. Chervonenkis et al.}
%
\institute{Moscow Institute of Physics and Technology, Russia \and
Institute for Information Transmission Problems RAS, Moscow, Russia \and
Innopolis University, Russia \and
Skolkovo Institute of Science and Technology, Russia \and
The Institute for System Programming of the Russian Academy of Sciences, Russia }

\maketitle

\begin{abstract}
    The challenges of black box optimization arise due to imprecise responses and limited output information. This article describes new results on optimizing multivariable functions using an Order Oracle, which provides access only to the order between function values and with some small errors. We obtained convergence rate estimates for the one-dimensional search method (golden ratio method) under the condition of oracle inaccuracy, as well as convergence results for the algorithm on a "square" (also with noise), which outperforms its alternatives. The results obtained are similar to those in problems with oracles providing significantly more information about the optimized function. Additionally, the practical application of the algorithm has been demonstrated in maximizing a preference function, where the parameters are the acidity and sweetness of the drink. This function is expected to be convex or at least quasi-convex.
\keywords{Nesterov's method  \and Linear search \and Order Oracle with inaccuracy.}
\end{abstract}

\section{Introduction}

The search for the minimum of a function is a common problem. In optimization, the concept of a "black box" \cite{chen2017zoo} is traditionally used, where providing an input point yields the function value (zeroth-order oracle) or gradient (first-order oracle) at that point. However, when dealing with human feedback, such information is often unavailable. In such cases, one can ask, "Which point do you prefer, x or y?" Human preferences may be imprecise, introducing small errors known as noise. This leads us to the concept of an order oracle.

This particular type of oracle has been receiving special attention for some time now. The Stochastic Three Points Method proposed by Bergou et al. (2020) \cite{bergou2020stochastic} uses an oracle that compares three function values at once and achieves a complexity of $O(dL/\mu \,\, log(1/\varepsilon))$ iterations in the strongly convex case (notations are defined here \ref{sec:def}). Saha et al. (2021) \cite{saha2021dueling} introduced another algorithm where the oracle compares two function values only once per iteration. The analysis of this algorithm is based on the Sign SGD and achieves a complexity of $O(dL/\mu \,\, log(1/\varepsilon))$ iterations in the strongly convex case. Tang et al.'s work (2023) \cite{tang2023zeroth} demonstrated the extensive use of the Order Oracle in Reinforcement Learning with Human Feedback, providing an estimate of oracle complexity as $O(d/\varepsilon^2)$ iterations specifically in the non-convex case. There have been many other notable results in this field \cite{jamieson2012query,gorbunov2019stochastic,lobanov2024order}.

There has long been an algorithm that can work with order oracle in one-dimensional space: the golden ratio algorithm (GRM). There were also attempts to generalize it to two-dimensional space \cite{he2002object}.

Our proposed algorithm uses a line search via GRM and is particularly effective for small-scale problems even in the presence of noise. For sufficiently large $\varepsilon$ on a square with side length $R$ and noise $\Delta$, its complexity is \\ $O(log(MR/\varepsilon) \,\, log(MR/\Delta))$ iterations.

Moreover, we demonstrate its application in the practical task of maximizing the preference function for a sour-sweet beverage.

\section{Definitions and designations} \label{sec:def}
$f : D \rightarrow \mathbb{R}, D \in \mathbb{R}^d$ --- a function the minimum of which needs to be found. We will assume that it is convex and differentiable (otherwise the problem immediately becomes much more difficult).\\
\\
Let's formalize the oracle order. It will be a function $$\psi(x, y) = sign(f(x) - f(y) + \delta(x, y)),$$ where $\delta(x, y)$ is  a real noise function, which we consider bounded: $|\delta(x, y)| \leqslant \Delta$ for all $x$, $y$.\\
\\
$\varepsilon$ --- accuracy in the function value with which we want to evaluate the point of minimum. In other words, we are looking for such an $x_0$, that $f(x_0) - minf(x) \leqslant \varepsilon$.\\
\\
We will also assume that the function $f$ has Lipschitz properties:
\begin{enumerate}
    \item Lipschitzness with respect to the argument: $|f(x) - f(y)| \leqslant M \cdot \|x - y\|$ for all $x$, $y$.
    \item Lipschitzness with respect to the gradient: $|\nabla f(x) - \nabla f(y)| \leqslant L \cdot \|x - y\|$ for all $x$, $y$.
\end{enumerate}
Sometimes we will need to consider a narrower class of functions --- strongly convex ones:\\
a function $f$ is strongly convex with a coefficient $\mu$, if for all $x$, $y$ the inequality is satisfied $$f(y) \geqslant f(x) + \langle \nabla f(x), y - x \rangle + \frac{\mu}{2}\|y-x\|^2 $$
Note that $\Delta, \varepsilon, M, L$, as well as in case of strong convexity, $\mu$ are positive numbers and we consider them known unlike the function $f$ and the noise function $\delta$ themselves.

\section{Main algorithms}
    Let's start by considering the simplest case --- one-dimensional: $d = 1$, that is, when $f$ is a function of a single variable. This case is very important, as all further algorithms will be based on it. The fastest algorithm here is the so-called golden ratio method \ref{sec:GRM}.
    
    Next is the general case, where $f$ is a function of multiple variables. There are already many possible algorithms, such as the coordinate descent method, accelerated methods, and others. A detailed discussion of them with consideration of different problem formulations can be found here \cite{lobanov2024order}.
    
    There is also an algorithm that works well in low dimensions (here considered in a two-dimensional variant) based on Nesterov's method \ref{sec:Square}. The original algorithm, which works for a first-order oracle (i.e., capable of computing the value and gradient of the function at a point), can be found here \cite{пасечнюк2019одном}.

\section{Golden ratio method}\label{sec:GRM}
    \emph{Algorithm description}. Let's find the minimum of a convex function \( f \) with good accuracy in the interval \( [a, b] \). Let \( a_0 = a \), \( b_0 = b \), \( s_0 = b - \frac{{b-a}}{{\phi}} \), \( t_0 = a + \frac{{b-a}}{{\phi}} \). Thus, each of the marked points \( s \) and \( t \) divides the interval \( [a, b] \) in the golden ratio: \( \frac{{b_0-s_0}}{{s_0-a_0}} = \phi \), \( \frac{{t_0-a_0}}{{b_0-t_0}} = \phi \). The current interval on which we are trying to find the minimum is \( [a_0, b_0] \). After \( i \) iterations of the algorithm, we have an interval \( [a_i, b_i] \), with points \( s_i \) and \( t_i \) marked on it, dividing it in the golden ratio: \( \frac{{b_i-s_i}}{{s_i-a_i}} = \phi \), \( \frac{{t_i-a_i}}{{b_i-t_i}} = \phi \). We request the comparison result of the function values at points \( s_i \) and \( t_i \), that is \( \psi(s_i, t_i) \). If it turns out that \( \psi(s_i, t_i) = "+" \), then \( f(s_i) \) is greater or insignificantly less than \( f(t_i) \) (an erroneous result can occur due to noise), and in this case we "discard" the interval \( [a, s_i] \), moving on to consider the interval \( [s_i, b] \) (assuming that the minimum is not in \( [a, s_i] \)). Thus, \( a_{i+1} = s_i \), \( b_{i+1} = b_i \), \( s_{i+1} = b_{i+1} - \frac{{b_{i+1}-a_{i+1}}}{{\phi}} \), \( t_{i+1} = a_{i+1} + \frac{{b_{i+1}-a_{i+1}}}{{\phi}} \). Note that \( s_{i+1} \) and \( t_{i+1} \) could be recalculated as follows: \( s_{i+1} = t_i \), \( t_{i+1} = a_{i+1} + b_{i+1} - s_{i+1} \). If \( \psi(s_i, t_i) = "-" \), by analogy, \( a_{i+1} = a_i \), \( b_{i+1} = t_i \), \( t_{i+1} = s_i \), \( s_{i+1} = a_{i+1} + b_{i+1} - t_{i+1} \). After performing \( n \) such iterations, we take \( \frac{{a_n+b_n}}{2} \) as the answer.

\newpage

\begin{algorithm}[H]
    \caption{Golden Ratio Method}
    \begin{algorithmic}[1]
        \STATE $\phi \gets \frac{\sqrt{5} + 1}{2}$
        \STATE $a_0 \gets a$
        \STATE $b_0 \gets b$
        \STATE $s_0 \gets b - \frac{{b-a}}{{\phi}}$
        \STATE $t_0 \gets a + \frac{{b-a}}{{\phi}}$
	\FOR {$i=0,1,...,n-1$}
            \IF {$\psi(s_i, t_i) = "+"$}
                \STATE $a_{i+1} \gets s_i$
                \STATE $b_{i+1} \gets b_i$
                \STATE $s_{i+1} \gets t_i$
                \STATE $t_{i+1} \gets a_{i+1} + b_{i+1} - s_i$
            \ENDIF
            \IF {$\psi(s_i, t_i) = "-"$}
                \STATE $a_{i+1} \gets a_i$
                \STATE $b_{i+1} \gets t_i$
                \STATE $t_{i+1} \gets s_i$
                \STATE $s_{i+1} \gets a_{i+1} + b_{i+1} - t_i$
            \ENDIF
	\ENDFOR
        \STATE $answer = \frac{a_n + b_n}{2}$
    \end{algorithmic}
\end{algorithm}

It turns out that by performing not so many iterations, we quickly obtain a fairly accurate answer. Let's make some estimates on the optimal number of steps and the accuracy of the found solution.

\begin{theorem} \label{th:1}
    Let the function $f$ be defined on the interval $[0, R]$. The golden ratio $\phi = \frac{\sqrt{5} + 1}{2}$, $e$ --- Euler's number. Let $C = \frac{e R M \ln{\phi}}{2 \phi}$. Then, if we perform $n_0 = \log_\phi(\frac{C}{e\Delta})$ iterations of the algorithm, the error in terms of the value of the function at the midpoint of the remaining interval will be not worse than $\phi\Delta \log_\phi(\frac{C}{\Delta})$.
\end{theorem}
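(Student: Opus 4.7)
The plan is to track the quantity $\mu_i := \min_{x \in [a_i, b_i]} f(x)$ along the iterations. Writing $\hat x = (a_{n_0}+b_{n_0})/2$ and $L_i = b_i - a_i = R/\phi^i$, Lipschitzness of $f$ gives $f(\hat x) - f^* \leq (\mu_{n_0} - f^*) + ML_{n_0}/2$, so the task reduces to bounding these two summands. The second is a standard bracketing bound; the first records the damage accumulated from possibly wrong cuts, and will be the core of the argument.

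I would classify each step as \emph{correct} or \emph{wrong} depending on whether the argmin of $f$ on $[a_i, b_i]$ is retained by the cut. A correct cut preserves $\mu_i$, so only wrong cuts contribute. Consider a wrong ``$+$'' step: we discard $[a_i, s_i]$, which contains the current minimizer $w$. This $w$ is $x^*$ when $x^*$ is still inside $[a_i, b_i]$, and $w = a_i$ when the true minimum has already been pushed out to the left by earlier errors. In either subcase $w < s_i < t_i$ and $f$ is nondecreasing on $[w, \infty)$, so the new bracket's minimum equals $f(s_i)$ and $\mu_{i+1} - \mu_i = f(s_i) - f(w)$.

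The main obstacle is the convexity estimate that converts the noise tolerance into a clean uniform increment. Since $\psi(s_i, t_i) = +$ means $f(s_i) - f(t_i) + \delta(s_i, t_i) > 0$ with $|\delta| \leq \Delta$, we get $f(t_i) - f(s_i) \leq \Delta$. Writing $s_i = \alpha w + (1-\alpha) t_i$ with $\alpha = (t_i - s_i)/(t_i - w)$ and applying Jensen's inequality, then rearranging,
\begin{equation*}
f(s_i) - f(w) \;\leq\; \frac{1-\alpha}{\alpha}\bigl(f(t_i) - f(s_i)\bigr) \;=\; \frac{s_i - w}{t_i - s_i}\bigl(f(t_i)-f(s_i)\bigr) \;\leq\; \frac{L_i/\phi^{2}}{L_i/\phi^{3}}\,\Delta \;=\; \phi\,\Delta,
\end{equation*}
using $s_i - w \leq s_i - a_i = L_i/\phi^2$ and $t_i - s_i = L_i/\phi^3$ from the golden-ratio placement. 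Thus $\mu_{i+1} - \mu_i \leq \phi\Delta$ for every wrong cut; the ``$-$'' case is symmetric (with $b_i - t_i = L_i/\phi^2$). Summing over at most $n_0$ wrong cuts yields $\mu_{n_0} - f^* \leq n_0\,\phi\Delta$.

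Finally I would plug in $n_0 = \log_\phi(C/(e\Delta))$ with $C = eRM\ln\phi/(2\phi)$, which gives $L_{n_0} = 2\phi\Delta/(M\ln\phi)$ and therefore $ML_{n_0}/2 = \phi\Delta/\ln\phi = \phi\Delta \log_\phi e$. Adding the two contributions,
\begin{equation*}
f(\hat x) - f^* \;\leq\; \phi\Delta \log_\phi\!\Bigl(\frac{C}{e\Delta}\Bigr) + \phi\Delta \log_\phi e \;=\; \phi\Delta \log_\phi\!\Bigl(\frac{C}{\Delta}\Bigr),
\end{equation*}
which is exactly the claimed bound. The choice of $C$ is engineered precisely so that the Lipschitz residual contributes the missing $+\log_\phi e$ needed to merge with the error-accumulation logarithm; the rest is bookkeeping once the $\phi\Delta$ per-step increment is established.
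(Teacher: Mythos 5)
Your proposal is correct and follows essentially the same route as the paper's proof: track the minimum $m_k$ over the surviving bracket, show a per-step increment of at most $\phi\Delta$ via the convexity (Jensen) bound $f(s_i)-f(w)\leq \frac{s_i-w}{t_i-s_i}\bigl(f(t_i)-f(s_i)\bigr)\leq\phi\Delta$ on erroneous cuts, add the Lipschitz term $ML_{n_0}/2$ for the midpoint, and let the choice of $C$ absorb the $\log_\phi e$. The only cosmetic difference is that you identify the retained minimizer $w$ slightly more carefully than the paper does; the estimates and the final bookkeeping are identical.
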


\begin{proof}
    Let $[a_k, b_k]$ be the residual interval after $k$ iterations. $[a_0, b_0] = [0, R]$. It is easy to see that $b_k - a_k = \frac{R}{\phi^k}$. Let $m_k = \min\limits_{x \in [a_k, b_k]}f(x)$. Consider the next iteration of the algorithm. At this point, the residual interval is $[a_k, b_k]$. Let the points where values are compared be denoted as $s$ and $t$ ($a_k < s < t < b_k$). We aim to prove that
    \begin{equation} \label{eq:1}
        m_{k+1} - m_k \leqslant \phi\Delta.
    \end{equation}
    Without loss of generality, assume that the oracle claims $f(s) > f(t)$. Then there are two cases:
    \begin{enumerate}
        \item The oracle is correct. In this case, $m_{k+1} = m_k$, and inequality \eqref{eq:1} is obvious.
        \item The oracle is incorrect. Then $0 \leqslant f(t) - f(s) \leqslant \Delta$. The new residual interval will be $[s, b_k]$. By convexity, we have $$\forall x \in [a_k, s] \quad f(x) \geqslant f(s) - \frac{s - x}{t - s} (f(t) - f(s)) \geqslant f(s) - \frac{s - a_k}{t - s}\Delta = f(s) - \phi\Delta,$$ which implies $m_k \geqslant m_{k+1} - \phi\Delta$, and inequality \eqref{eq:1} is proven.
    \end{enumerate}
    Summing up inequality \eqref{eq:1} over $k$, we obtain that after $n$ iterations of the algorithm
    \begin{equation} \label{eq:2}
        m_n \leqslant n\phi\Delta + m_0.
    \end{equation}
    The midpoint $x_0$ is chosen on the interval $[a_n, b_n]$. By Lipschitzness with respect to the argument, we have
    $$f(x_0) - m_0 \leqslant \frac{b_n - a_n}{2} M = \frac{R M}{2 \phi^n}.$$
    Consequently, using \eqref{eq:2}, we get
    \begin{equation} \label{eq:3}
        f(x_0) - m_0 \leqslant f(x_0) - m_n + n\phi\Delta \leqslant \frac{RM}{2\phi^n} + n\phi\Delta.
    \end{equation}
    By performing $n_0 = \log_\phi(\frac{C}{e\Delta}) = \log_\phi(\frac{RM\ln{\phi}}{2\phi\Delta})$ iterations, we achieve an accuracy of
    $$\frac{RM}{2\phi^n} + n\phi\Delta = \frac{\phi\Delta}{\ln{\phi}} + \log_\phi(\frac{C}{e\Delta})\phi\Delta = \phi\Delta\log_\phi(\frac{C}{\Delta})$$
\end{proof}

\begin{lemma} \label{l:1}
    Let the function $f$ be defined on the interval $[0, R]$, and let it be strongly convex with a coefficient $\mu$. Suppose a point $x_0$ has been found with accuracy $\varepsilon$ in terms of function value. Then $x_0$ is a minimum with accuracy $\delta = \sqrt{\frac{2\varepsilon}{\mu}}$ in terms of the argument, that is, $x_0 - \arg\min{f(x)} \leqslant \delta$.
\end{lemma}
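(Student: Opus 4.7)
The plan is to use strong convexity directly, anchored at the true minimizer $x^\ast = \arg\min f(x)$, and compare $x_0$ to $x^\ast$ through the quadratic lower bound that strong convexity supplies.

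First I would write down the strong convexity inequality from the definition in Section~\ref{sec:def}, taking $x = x^\ast$ and $y = x_0$:
$$f(x_0) \geq f(x^\ast) + \langle \nabla f(x^\ast), x_0 - x^\ast\rangle + \frac{\mu}{2}\|x_0 - x^\ast\|^2.$$
The next step is to eliminate the linear term. By first-order optimality at $x^\ast$ over the interval $[0,R]$, the inner product $\langle \nabla f(x^\ast), x_0 - x^\ast\rangle$ is nonnegative: if $x^\ast$ is interior the gradient vanishes, and if $x^\ast$ is a boundary point then $\nabla f(x^\ast)$ points into the interval, so the product with any admissible $x_0 - x^\ast$ is $\geq 0$. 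Dropping this nonnegative term gives
$$f(x_0) - f(x^\ast) \geq \frac{\mu}{2}\|x_0 - x^\ast\|^2.$$

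Finally I would combine this with the hypothesis $f(x_0) - f(x^\ast) \leq \varepsilon$ to obtain $\|x_0 - x^\ast\|^2 \leq 2\varepsilon/\mu$, and then take a square root to conclude $|x_0 - x^\ast| \leq \sqrt{2\varepsilon/\mu} = \delta$, which is exactly the claim.

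There is really no main obstacle here; the only subtle point is the treatment of the linear term when $x^\ast$ lies on the boundary of $[0,R]$, which I handle by invoking the first-order optimality condition rather than $\nabla f(x^\ast)=0$. Everything else is a direct substitution into the strong convexity inequality provided in the definitions.
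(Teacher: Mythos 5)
Your proof is correct and follows essentially the same route as the paper: substitute $x^\ast$ and $x_0$ into the strong convexity inequality, discard the linear term, and rearrange. In fact your handling of the linear term is slightly more careful than the paper's, which simply asserts $\nabla f(x^\ast)=0$ and thus implicitly assumes the minimizer is interior to $[0,R]$, whereas your appeal to the first-order optimality condition $\langle \nabla f(x^\ast), x_0 - x^\ast\rangle \geq 0$ also covers a boundary minimizer.
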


\begin{proof}
    Let $x^*$ be the point of minimum. From strong convexity, we have $$f(x_0) \geqslant f(x^*) + \langle \nabla f(x^*), x_0 - x^* \rangle + \frac{\mu}{2}\|x^*-x_0\|^2 = f(x^*) + \frac{\mu}{2}|x_0 - x^*|^2,$$ since $\nabla f(x^*) = 0$. Manipulating the inequality, we obtain $$\varepsilon \geqslant f(x_0) - f(x^*) \geqslant \frac{\mu}{2}|x_0 - x^*|^2.$$ Hence, we conclude that $x_0 - x^* \leqslant \sqrt{\frac{2\varepsilon}{\mu}}$, and thus the lemma is proven.
\end{proof}

\section{Method for finding the minimum on a square}\label{sec:Square}
    Let's move on to the two-dimensional case. Suppose the function $f$ is defined on a square $\Pi \subset \mathbb{R}^2$ with side length $R$.\\
    \\
    \emph{Algorithm description}. On the horizontal midline of the square, we find the minimum point $A$ (with some level of accuracy). This line divides the square into two rectangles. Then, on the vertical line passing through $A$ we also find the minimum point --- $B$. We keep the rectangle into which point $B$ falls and discard the remaining one. We repeat a similar procedure with the remaining rectangle, first dividing it by a vertical midline and then by a horizontal one, which means after pruning, we are left with a square half the size of the original one. We repeat these procedures a certain number of times and then take the central point of the remaining small square.


\begin{algorithm}[H]
    \caption{Two-dimensional Search}
    \begin{algorithmic}[1]
        \STATE $LinearSearch((x_1, y_1), (x_2, y_2))$ --- result of one-dimensional search between points $(x_1, y_1)$ and $(x_2, y_2)$ (it is a point)
        \STATE $a$ --- x-coordinate of the center of the square
        \STATE $b$ --- y-coordinate of the center of the square
        \STATE $d$ --- half the side of the square
	\FOR {$i=0,1,...,n-1$}
            \STATE $result_1 \gets LinearSearch((a-d, b), (a+d, b))$
            \STATE $result_2 \gets LinearSearch((result_1.x, b-d), (result_1.x, b+d))$
            \IF {$result_2.y \leqslant result_1.y$}
                \STATE $b \gets b + \frac{d}{2}$
            \ELSE
                \STATE $b \gets b - \frac{d}{2}$
            \ENDIF
            \STATE $result_1 \gets LinearSearch((a, b-\frac{d}{2}), (a, b+\frac{d}{2}))$
            \STATE $result_2 \gets LinearSearch((a-d, result_1.y), (a+d, result_1.y))$
            \IF {$result_2.x \leqslant result_1.x$}
                \STATE $a \gets a + \frac{d}{2}$
            \ELSE
                \STATE $a \gets a - \frac{d}{2}$
            \ENDIF
	\ENDFOR
        \STATE $answer = (a, b)$
    \end{algorithmic}
\end{algorithm}

\begin{theorem} \label{th:2}
    Let the auxiliary one-dimensional problems be solved with accuracy
    \begin{equation} \label{eq:4}
        \delta = \frac{\varepsilon}{2(2+\sqrt{10})LR}.
    \end{equation}
    Then, we are guaranteed to obtain a point with the desired accuracy ($\varepsilon$)  in terms of the function value by performing
    \begin{equation} \label{eq:5}
        n = \log_2\frac{MR\sqrt{2}}{\varepsilon}
    \end{equation}
    iterations (transition from one square to a square half the size counts as one iteration).
\end{theorem}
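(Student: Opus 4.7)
I would decompose $f(c_n) - \min_\Pi f = [f(c_n) - \min_{S_n} f] + [\min_{S_n} f - \min_\Pi f]$, where $S_k$ is the square remaining after $k$ iterations and $c_n$ is its centre, and bound each bracket by $\varepsilon/2$. The first bracket is immediate from Lipschitzness of $f$: every iteration halves the side by construction, so after $n = \log_2(MR\sqrt{2}/\varepsilon)$ iterations the half-diagonal of $S_n$ equals $R\sqrt{2}/2^{n+1} = \varepsilon/(2M)$, hence $f(c_n) - \min_{S_n} f \leq M\cdot\varepsilon/(2M) = \varepsilon/2$.

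The second bracket is the technical core. Setting $m_k = \min_{S_k} f$ and $d_k = R/2^{k+1}$, I would establish a per-iteration bound
\[
 m_{k+1} - m_k \leq (2+\sqrt{10})\,L\,d_k\,\delta
\]
and sum via the geometric series $\sum_{k=0}^{n-1} d_k = R(1-2^{-n})<R$ to conclude $\min_{S_n}f-\min_\Pi f \leq (2+\sqrt{10})LR\delta = \varepsilon/2$ by the choice of $\delta$ in \eqref{eq:4}. To obtain the per-iteration bound I would case-split on which of the two cuts inside a single iteration (the horizontal split followed by the vertical split) is ``wrong,'' i.e., discards the half containing the current true minimizer. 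A correct cut costs nothing. A wrong cut forces, via convexity of $f$ restricted to the relevant axis together with the $\delta$-accuracy of the associated 1D search, the partial derivative at the intermediate search point to be at most $\sqrt{2L\delta}$, using the standard identity $\|\nabla g\|^2 \leq 2L(g-\min g)$ for convex $L$-smooth $g$. Lipschitzness of $\nabla f$ over displacements $\leq d_k$ then upgrades this to a full-gradient bound at the same point, and a final application of convexity $f(y)-f(p)\leq\langle\nabla f(y),y-p\rangle$ plus the worst-case Euclidean displacement from that intermediate point to the centre of $S_{k+1}$, namely $d_k\sqrt{10}/2 = \sqrt{(3d_k/2)^2+(d_k/2)^2}$ (achieved when the approximate horizontal minimizer $A$ lands at the far edge of $S_k$ and the algorithm proceeds to the opposite quadrant), assembles the constant $2+\sqrt{10}$.

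The delicate step is the per-iteration bound. Enumerating the sub-cases (cut~1 wrong, cut~2 wrong, both wrong), showing in each that the 1D $\delta$-accuracy combined with convexity forces near-stationarity at the relevant intermediate search point, and then tracking how the Lipschitz-propagated gradient bounds interact with the geometric displacement to the new square is where the proof has to be careful; pinning down the tight constant $\sqrt{10}$ requires identifying the worst-case position of $A$ (or $D$) relative to $S_{k+1}$. The telescoping $\sum d_k < R$ is what allows $LR$, rather than $Ln$, to appear in the denominator of \eqref{eq:4}, and is ultimately why the two-dimensional complexity scales as $\log(MR/\varepsilon)\cdot\log(MR/\Delta)$ rather than deteriorating with the number of outer iterations.
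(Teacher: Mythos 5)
Your overall architecture coincides with the paper's: the same two-term decomposition, the same $\varepsilon/2$ bound on the first term via the half-diagonal of the final square, and a telescoping per-iteration bound on the drift of the minimum summed over a geometric series. The gap is in the mechanism you propose for the per-iteration bound, and it is a real one. First, the inequality $\|\nabla g\|^2 \le 2L(g-\min g)$ converts \emph{function-value} accuracy of the 1D search into a derivative bound of order $\sqrt{2L\delta}$; but the $\delta$ of \eqref{eq:4} is an \emph{argument} accuracy (it has units of length, since $LR\delta\sim\varepsilon$), and a derivative bound of order $\sqrt{\delta}$ cannot produce the per-iteration bound $(2+\sqrt{10})L d_k\delta$ that you yourself claim --- your two steps are dimensionally inconsistent with each other and with \eqref{eq:4}. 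Moreover that inequality only controls the component of $\nabla f$ \emph{along} the search line; a single 1D search does not constrain the orthogonal component at all. Second, ``Lipschitzness of $\nabla f$ over displacements $\le d_k$'' gives a gradient perturbation of order $Ld_k = LR/2^{k+1}$, a macroscopic quantity that swamps any $O(\delta)$ bound; the Lipschitz propagation has to take place over displacements of order $\delta$, not $d_k$.

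The paper's actual mechanism is different: assume for contradiction that the true minimizer lies in the discarded rectangle; then the gradient at the exact line-minimizer $x_0$ on the midline is perpendicular to that line and points away from the discarded half, while the gradient at the nearby point $x_\delta$ (or at the second line's exact minimizer $x_0'$, which lies within $\sqrt{2}\delta$ of $x_0$) points in a direction making a non-acute or right angle with it. Two gradients at non-acute angle, evaluated at points within $O(\delta)$ of each other, force $\|\nabla f(x_0)\|\le\|\nabla f(x_0)-\nabla f(\cdot)\|\le L\cdot O(\delta)$, hence $\|\nabla f(x_0)\|\le L\sqrt{2}\delta$; convexity then caps the drop of $f$ over the discarded rectangle by $L\sqrt{2}\delta$ times that rectangle's diagonal $R_j\sqrt{5}/2$. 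That is where $\sqrt{10}=\sqrt{2}\cdot\sqrt{5}$ comes from --- not from your displacement $\sqrt{(3d_k/2)^2+(d_k/2)^2}$ to the centre of $S_{k+1}$, which is the wrong reference point (the relevant displacement is from $x_0$ to the true minimizer, which can sit anywhere in the discarded region). Your numeral matches by coincidence; the argument behind it does not close.
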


\begin{proof}
    \quad
    \begin{enumerate}
        \item Let's denote the remaining square after $j$ iterations as $K_j$ ($K_0 = \Pi$). Since each subsequent square is half the size of the previous one, the diagonal of $K_n$ is $\frac{R\sqrt{2}}{2^n}$.  Then, if $x$ is the center of $K_n$, we have
        \begin{equation} \label{eq:6}
            f(x) - \min\limits_{x \in K_n}f(x) \leqslant M\|x - \arg\min\limits_{x \in K_n}f(x)\| \leqslant M\frac{R\sqrt{2}}{2^n}\frac{1}{2} \stackrel{\eqref{eq:5}}{=} \frac{\varepsilon}{2}.
        \end{equation}
        Now, if we can show that
        \begin{equation} \label{eq:7}
            \min\limits_{x \in K_n}f(x) - \min\limits_{x \in \Pi}f(x) \leqslant \frac{\varepsilon}{2},
        \end{equation}
        by adding \eqref{eq:6} and \eqref{eq:7}, we obtain the required inequality.

        \item After $j$ iterations, the square $K_j$ with a side length of $R_j = \frac{R}{2^j}$ remains. At the current iteration, it is divided into two rectangles --- $Q$ and $Z$, and $Q$ remains. Let's denote the horizontal midline as $l_1$. A point $x_\delta$ (corresponding to point $A$ in the algorithm description) is found on this line. Let $x_0$ be the point where the minimum on $l_1$ is achieved. Then $\|x_\delta - x_0\| \leqslant \delta$. The line passing vertically through $x_\delta$ is denoted as $l_2$. A point $x_\delta$ (corresponding to point $B$ in the algorithm description) is found on this line. Let $x'_0$ be the point where the minimum on $l_2$ is achieved. Then $\|x'_\delta - x'_0\| \leqslant \delta$. Let's prove that
        \begin{equation} \label{eq:8}
            \min\limits_{x \in Q}f(x) - \min\limits_{x \in K_j}f(x) \leqslant L\delta\frac{R_j\sqrt{10}}{2}.
        \end{equation}
        Assume that \eqref{eq:8} does not hold. Then 
        $x_* :=  \arg\min\limits_{x \in K_j}f(x)$ lies in $Z$. We will consider two cases of the location of $x'_0$.
        \begin{enumerate}
            \item $x'_0 \in Q$.\\
            Since $x_0$ is the minimum point on $l_1$, and $f$ is convex and achieves a minimum on $Z$, the gradient vector $\nabla f(x_0) \perp l_1$ and points towards Q. Additionally, since $x'_0$ --- is the minimum point on $l_2$, and $x'_0 \in Q$, the gradient vector $\nabla f(x_\delta)$ projected onto $l_2$ points towards $Z$, thus it itself points towards $Z$. The angle between the gradient vectors at points $x_0$ and $x_\delta$ is not acute. Using the fact that the largest side of a triangle is opposite to the obtuse angle, we can write the following inequality:
            \begin{equation} \label{eq:9}
                \|\nabla f(x_0)\| \leqslant \|\nabla f(x_\delta) - \nabla f(x_0)\| \leqslant L\|x_\delta - x_0\| \leqslant L\delta.
            \end{equation}
            Due to the convexity of $f$ and $x_*$ lying in $Z$, $x_0$ is the minimum over all of $Q$. Also note that $\|x_* - x_0\|$ is not greater than the diagonal of $Z$, i.e., $\frac{R_j\sqrt{5}}{2}$. Therefore, we obtain
            \begin{align*}
                \min\limits_{x \in Q}f(x) - \min\limits_{x \in K_j}f(x) &= f(x_0) - f(x_*) \leqslant \langle \nabla f(x_0), x_* - x_0 \rangle \\ &\leqslant \|\nabla f(x_0)\| \cdot \|x_* - x_0\| \stackrel{\eqref{eq:9}}{\leqslant} L\delta\frac{R_j\sqrt{5}}{2}.
            \end{align*}
            Thus, \eqref{eq:8} holds. Contradiction.
            
            \item $x'_0 \in Z$.\\
            By similar reasoning, we have $\nabla f(x_0) \perp l_1 \perp l_2 \perp \nabla f(x'_0)$. Since $Q$ is the rectangle that remains, $x'_\delta \in Q$. However, $x'_0 \in Z$. Therefore, $\|x_\delta - x'_0\| \leqslant \delta$. Then
            \begin{equation} \label{eq:10}
                \|x_0 - x'_0\| = \sqrt{\|x_\delta - x'_0\|^2 + \|x_\delta - x_0\|^2} \leqslant \sqrt{2}\delta.
            \end{equation}
            The gradient vectors at points $x_0$ and $x'_0$ are perpendicular, so
            \begin{equation} \label{eq:11}
                \|\nabla f(x_0)\| \leqslant \|\nabla f(x_0) - \nabla f(x'_0)\| \leqslant L\|x_0 - x'_0\| \stackrel{\eqref{eq:10}}{\leqslant} L\sqrt{2}\delta.
            \end{equation}
            Now, similarly to the previous case, we get
            \begin{align*}
                \min\limits_{x \in Q}f(x) - \min\limits_{x \in K_j}f(x) &= f(x_0) - f(x_*) \leq \langle \nabla f(x_0), x_* - x_0 \rangle\\
                &\leq \|\nabla f(x_0)\| \cdot \|x_* - x_0\| \stackrel{\eqref{eq:11}}{\leq} L\delta\frac{R_j\sqrt{10}}{2}.
            \end{align*}
            And once again, \eqref{eq:8} holds. Contradiction.
        \end{enumerate}
        Thus, \eqref{eq:8} is proven.

        \item Similarly, we can obtain an estimate for the difference in minima when transitioning from $Q$ to $K_{j+1}$. The only change will be the maximum distance between points within one region - it will decrease from $\frac{R_j\sqrt{5}}{2}$ to $\frac{R_j\sqrt{2}}{2}$. Thus, we obtain an estimate analogous to \eqref{eq:8}:
        \begin{equation} \label{eq:12}
            \min\limits_{x \in Q}f(x) - \min\limits_{x \in K_j}f(x) \leqslant L\delta R_j.
        \end{equation}

        \item Using \eqref{eq:8} and \eqref{eq:12}, we have
        $$\min\limits_{x \in K_n}f(x) - \min\limits_{x \in K_0}f(x) = \sum_{j=0}^{n-1} \min\limits_{x \in K_{j+1}}f(x) - \min\limits_{x \in K_j}f(x) \leqslant$$ $$\leqslant \sum_{j=0}^{n-1} L\delta\frac{R_j\sqrt{10}}{2} + L\delta R_j = L\delta\sum_{j=0}^{n-1} \frac{R}{2^j}(\frac{\sqrt{10}}{2} + 1) \leqslant L\delta R(2+\sqrt{10}) \stackrel{\eqref{eq:4}}{=} \frac{\varepsilon}{2}$$
        We have obtained \eqref{eq:7}, which concludes the proof of the theorem.
    \end{enumerate}
\end{proof}

\quad\\
Now we can combine the results obtained in one-dimensional and two-dimensional cases.

\begin{theorem} \label{th:3}
    Let the function $f$ be defined on the square $\Pi \subset \mathbb{R}^2$ with side length $R$, where it is strongly convex with a coefficient $\mu$. And let $C = \frac{e R M \ln{\phi}}{2 \phi}$. Then in the algorithm from the theorem \ref{th:2}, under the condition that $$\varepsilon \geqslant \sqrt{\frac{2\phi\Delta\log_\phi(\frac{C}{\Delta})}{\mu}} \cdot 2(2+\sqrt{10})LR,$$ it is sufficient to perform $$n = 4 \cdot \log_2\frac{MR\sqrt{2}}{\varepsilon} \cdot \log_\phi(\frac{C}{e\Delta})$$ operations to achieve accuracy in the function value of $\varepsilon$.
\end{theorem}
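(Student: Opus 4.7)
The plan is to chain together the three previously established results: Theorem~\ref{th:2} reduces the 2D problem to a sequence of 1D subproblems that must be solved to a prescribed \emph{argument} accuracy, Lemma~\ref{l:1} converts that argument accuracy into the required \emph{function-value} accuracy on each 1D line (using strong convexity), and Theorem~\ref{th:1} tells us how many GRM steps achieve that function-value accuracy under noise $\Delta$. The count then drops out by multiplying the outer and inner loop bounds, with the factor $4$ accounting for the four \texttt{LinearSearch} calls per outer iteration of the square algorithm.

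First I would fix the target argument accuracy demanded by Theorem~\ref{th:2}, namely
$\delta = \tfrac{\varepsilon}{2(2+\sqrt{10})LR}$, and invoke Lemma~\ref{l:1} in reverse: to certify $\|x_\delta - x_0\|\le \delta$ on each 1D segment it is enough that the GRM return a point with function-value accuracy at most $\varepsilon' := \tfrac{\mu\delta^2}{2}$. (Strong convexity is inherited by restrictions of $f$ to line segments, so Lemma~\ref{l:1} applies to each subproblem.) Next I would appeal to Theorem~\ref{th:1}: performing $n_0 = \log_\phi\!\bigl(\tfrac{C}{e\Delta}\bigr)$ GRM iterations yields function-value accuracy at most $\phi\Delta\log_\phi\!\bigl(\tfrac{C}{\Delta}\bigr)$.

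The one place where a condition must be verified — and this is the main (though mild) obstacle — is that the accuracy delivered by the GRM is not worse than $\varepsilon'$, i.e.\ that
\[
\phi\Delta\log_\phi\!\bigl(\tfrac{C}{\Delta}\bigr) \;\le\; \tfrac{\mu\delta^2}{2}.
\]
Substituting $\delta = \tfrac{\varepsilon}{2(2+\sqrt{10})LR}$ and solving for $\varepsilon$ gives exactly the hypothesis
$\varepsilon \ge \sqrt{\tfrac{2\phi\Delta\log_\phi(C/\Delta)}{\mu}}\cdot 2(2+\sqrt{10})LR$ of the theorem, so under the assumption of Theorem~\ref{th:3} the GRM indeed meets the requirement of Theorem~\ref{th:2}. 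A small technical caveat to address is that the 1D segments on which we run GRM have length at most $R$ (not $R/2^j$), so using $R$ in $C = \tfrac{eRM\ln\phi}{2\phi}$ is a valid uniform bound across all subcalls.

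Finally I would count oracle calls. By Theorem~\ref{th:2} the outer loop performs $\log_2\!\bigl(\tfrac{MR\sqrt{2}}{\varepsilon}\bigr)$ iterations; inspection of Algorithm 2 shows that each outer iteration issues exactly four \texttt{LinearSearch} calls, and each such call costs $n_0 = \log_\phi\!\bigl(\tfrac{C}{e\Delta}\bigr)$ oracle queries by Theorem~\ref{th:1}. Multiplying gives the claimed total
\[
n \;=\; 4\cdot\log_2\!\bigl(\tfrac{MR\sqrt{2}}{\varepsilon}\bigr)\cdot\log_\phi\!\bigl(\tfrac{C}{e\Delta}\bigr),
\]
and combining with the conclusion of Theorem~\ref{th:2} yields the desired $\varepsilon$-accuracy in function value, completing the proof.
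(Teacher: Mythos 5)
Your proposal is correct and follows essentially the same route as the paper: combine Theorem~\ref{th:1} with Lemma~\ref{l:1} to get the argument accuracy $\sqrt{2\phi\Delta\log_\phi(C/\Delta)/\mu}$ delivered by $n_0=\log_\phi(C/(e\Delta))$ GRM steps, match it against the threshold $\delta=\varepsilon/(2(2+\sqrt{10})LR)$ required by Theorem~\ref{th:2} (which is exactly the stated condition on $\varepsilon$), and multiply the four \texttt{LinearSearch} calls per outer iteration by the two loop counts. Your added remarks on strong convexity restricting to segments and on $R$ being a uniform bound for the segment lengths are sound refinements the paper leaves implicit.
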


\begin{proof}
    By combining the theorem \ref{th:1} and the lemma \ref{l:1}, we obtain that in solving a one-dimensional problem, after $n_0 = \log_\phi(\frac{C}{e\Delta})$ operations, accuracy in the argument $\delta = \sqrt{\frac{2\phi\Delta\log_\phi(\frac{C}{\Delta})}{\mu}}$. is achieved. In the algorithm from the theorem \ref{th:2} the one-dimensional problem is solved 4 times for each iteration (2 times when transitioning from a square to a rectangle and 2 times from a rectangle to a square). The number of iterations performed is $k = \log_2\frac{MR\sqrt{2}}{\varepsilon}$. Therefore, the total number of operations will be $4kn_0 = 4 \cdot \log_2\frac{MR\sqrt{2}}{\varepsilon} \cdot \log_\phi(\frac{C}{e\Delta})$. Moreover, based on the theorem \ref{th:2}, to achieve accuracy $\varepsilon$ in the function value with this method, it is sufficient to satisfy the relationship $\delta \leqslant \frac{\varepsilon}{2(2+\sqrt{10})LR}$, which holds when $\varepsilon \geqslant \sqrt{\frac{2\phi\Delta\log_\phi(\frac{C}{\Delta})}{\mu}} \cdot 2(2+\sqrt{10})LR$.
\end{proof}

\section{Optimizing preference function}

We want to find the maximum of the individual preference function for a sweet and sour drink

One should note that this problem is a noisy one. Because a person's responses may depend on mood swings and other external factors.

Preference functions are likely to be concave \cite{lobanov2024order}. We used logarithmic coordinates because we expect from Stevens' law that the sensation of taste is a power function of the amount of ingredient \cite{stevens1957psychophysical,beebe1948general}.

The preference function maps the sourness and sweetness of the water onto the drink. $f: [1,4]\times[1,4] \to \mathbb{R}$. The first coordinate of the point $(x, y)$ corresponds to the level of acidity, and the second coordinate corresponds to the level of sweetness of water on a logarithmic scale. The exact formula for acidity: mass fraction of citric acid = $0.05\% * 3^x $. The exact formula for sweetness: mass fraction of sugar = $2\% * 2^y $. The boundary values were not chosen at random, but corresponded roughly to the limit of distinguishability or repugnance of taste. The relative mass fraction error in the preparation of water solutions was less than 5\%.

To find the position of the maximum of the function we will use the algorithm described in paragraph 5. Linear search via GRM operations will stop when a person cannot choose which drink they like more. After 4 iterations of linear search, the area of the region where the optimal point is found is reduced by 4 times. The algorithm reduced the search area 16 times in just 20 pairwise comparisons.


\newpage

\begin{figure} [H]
    \centering
    \includegraphics[width=\textwidth]{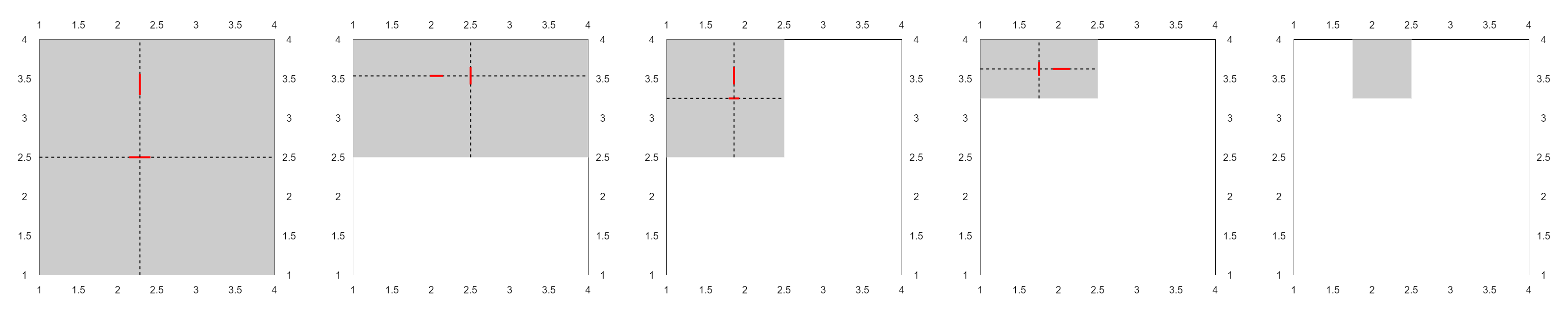}
    \caption{The progress of a squared algorithm for finding the maximum of the preference function of a sweet-and-sour drink. The x-axis is the acidity of water and the y-axis is its sweetness. The area where the optimal point is located is highlighted in gray. The dotted line shows the area where the linear search via GRM was performed. The bold red line connects two points where a person could not choose which drink they liked better. The search was stopped when the diameter of the square was of the same order of magnitude as the size of the red lines.}
    \label{fig1}
\end{figure}
The code for the experiments can be found at the link: \url{https://colab.research.google.com/drive/1jNX7_Naag-cL4D2lsmetV0B-3Q6GX1l3?usp=sharing}

\section{Conclusion}
In this study, the Nesterov's method \cite{пасечнюк2019одном} was modified to work with an Order Oracle: there is no longer a need to compute the subgradient or function values at individual points. Theoretical estimates accounting for noise were obtained.

The method for estimating the increase in the function minimum when transitioning to a smaller region performed well in finding the asymptotics for both one-dimensional and two-dimensional problems, showing approximately the same results as those for first-order oracle problems. Although not explicitly addressed in the study, similar reasoning can naturally be extended to higher dimensions.

In practice, the method converged faster than the theoretical estimates anticipated. The algorithm described in the study is suitable for real-world problems where only function values at points can be compared, but numerical function values at each individual point cannot be determined.

\subsubsection{\ackname} The research is supported by the Ministry of Science and Higher Education of the Russian Federation (Goszadaniye) No. 075-03-2024-117, project No. FSMG-2024-0025.

\bibliographystyle{splncs04}
\bibliography{biboptim}

\end{document}